\def\R{\mathbb{R}}
\def\Z{\mathbb{Z}}
\def\gam{\gamma}
\def\Gam{\Gamma}
\def\lam{\lambda}
\def\Lam{\Lambda}
\def\1{\mathds{1}}
\def\eps{\varepsilon}
\renewcommand\le{\leqslant}
\renewcommand\ge{\geqslant}
\renewcommand\leq{\leqslant}
\renewcommand\geq{\geqslant}
\renewcommand\hat{\widehat}
\renewcommand\Re{\operatorname{Re}}
\newcommand{\ft}[1]{\widehat #1}
\newcommand\dotprod[2]{\langle #1 , #2 \rangle}
\newcommand\mes{\operatorname{mes}}
\newcommand{\supp}{\operatorname{supp}}
\newcommand{\spec}{\operatorname{spec}}
\newcommand{\cupl}{\operatornamewithlimits{\bigcup}\limits}
\theoremstyle{plain}
\newtheorem{thm}{Theorem}
\newtheorem{lem}{Lemma}[section]
\newtheorem{prop}[lem]{Proposition}
\newcommand{\thmref}[1]{Theorem~\ref{#1}}
\newcommand{\lemref}[1]{Lemma~\ref{#1}}
\newtheorem*{theorem-m}{Theorem M}
\theoremstyle{definition}
\newenvironment{enumerate-math}
{\begin{enumerate}
\addtolength{\itemsep}{5pt}
}
{\end{enumerate}}
\newenvironment{enumerate-math-abc}
{\begin{enumerate}
\addtolength{\itemsep}{5pt}
}
{\end{enumerate}}
\begin{document}

\title{Quasicrystals with discrete support and spectrum}
\author{Nir Lev}
\address{Department of Mathematics, Bar-Ilan University, Ramat-Gan 52900, Israel}
\email{levnir@math.biu.ac.il}

\author{Alexander Olevskii}
\address{School of Mathematical Sciences, Tel-Aviv University, Tel-Aviv 69978, Israel}
\email{olevskii@post.tau.ac.il}

\thanks{Both authors are partially supported by their respective Israel Science Foundation grants.}

\begin{abstract}
We  proved  recently that a measure  on $\R$, whose support and spectrum are both uniformly discrete sets, 
must have a periodic structure. Here we show that this is not the case if the support and the spectrum are just 
discrete closed sets. 
\end{abstract}

\date{May 6, 2015}

\maketitle


\section{Introduction}

       By a Fourier quasicrystal one often means an (infinite) pure point measure $\mu$, whose Fourier transform
 is also a pure point        measure (see e.g.\  \cite{cah}, \cite{dys}).

Consider a (complex) measure $\mu$ on $\R^n$ supported on a discrete  set $\Lam$:
\begin{equation}
\label{mes1}
              \mu = \sum_{\lam\in \Lam} \mu(\lam) \delta_\lam, \quad
           \mu(\lam) \neq 0.
\end{equation}
Assume that $\mu$ is  a temperate distribution, and
          that its Fourier transform 
\[
               \ft{\mu}(t) := \sum_{\lam\in \Lam} \mu(\lam) e^{-2\pi i \dotprod{\lam}{t}} 
\]
         (in the sense of distributions)
          is also a pure point measure, namely
\begin{equation}
\label{mes2}
           \ft{\mu}=  \sum_ {s\in S}  \ft{\mu} (s) \delta_s, \quad          \ft{\mu}(s) \neq 0.
\end{equation}
          The set $\Lam$ is called the support of the measure $\mu$, while $S$ is called the spectrum.

    The classical example of such a measure comes from  Poisson's summation formula. The measure there is
    the sum of unit masses over a lattice, and the spectrum is the dual lattice.
    The problem whether other measures of Poisson type may exist, was studied
    by different authors.  See, in particular,  \cite{kah}, \cite{gui}, \cite{mey0}, \cite{deBr86}, \cite{bom},
\cite{cor1}, \cite{cor2}, \cite{lag2}. In the last paper one may find a comprehensive survey and references
    up to that date.

     Notice that a new peak of interest in the subject appeared after the experimental
    discovery in the middle of 80's  of the physical quasicrystals, demonstrating that 
    an aperiodic  atomic structure may have a diffraction pattern consisting of spots.

The  ``cut-and-project'' construction, introduced by Y.\ Meyer in the   beginning of 70's \cite{mey0},
     may serve as a good model for this phenomenon, see \cite{mey2}. It provides
     many examples of measures with uniformly discrete support and dense countable spectrum.

      On the other hand, we proved recently that if both the support and the spectrum 
     of a measure  on $\R$ are  uniformly discrete sets then the measure has a periodic
     structure.

\begin{thm}[\cite{lo1, lo2}]\label{thm0}
Let $\mu$ be a measure on $\R$ satisfying \eqref{mes1} and \eqref{mes2}, and assume that
$\Lam$ and $S$ are both uniformly discrete sets.
Then $\Lam$ is contained in a finite union of translates of an  arithmetic progression.
\end{thm}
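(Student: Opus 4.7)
My plan is to establish in sequence: (i) that $\Lam$ is a Meyer set, i.e.\ $\Lam - \Lam$ is uniformly discrete; (ii) that any associated cut-and-project representation of $\Lam$ must have trivial internal space; and (iii) conclude that $\Lam$ lies in a finite union of translates of an arithmetic progression.

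The first step would proceed through the autocorrelation $\gamma := \mu * \tilde\mu$, where $\tilde\mu = \sum_{\lam} \overline{\mu(\lam)}\, \delta_{-\lam}$. Uniform discreteness of $\Lam$ and of $S$, combined with the tempered-distribution hypothesis, forces both $\mu$ and $\ft\mu$ to be translation-bounded measures with bounded coefficients, so $\gamma$ is a well-defined pure-point measure supported in $\Lam - \Lam$, and $\ft\gamma = |\ft\mu|^2 = \sum_{s \in S} |\ft\mu(s)|^2 \delta_s$ is a positive pure-point measure whose support lies in the uniformly discrete set $S$. I would then invoke a classical principle (in the spirit of Cordoba, Meyer, and Favorov) asserting that a positive pure-point measure whose spectrum is uniformly discrete must itself have uniformly discrete support; applied to $\gamma$, this yields uniform discreteness of $\Lam - \Lam$, which is exactly the Meyer property.

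Once $\Lam$ is a Meyer set, Meyer's structure theorem realizes it as a subset of a model set cut out from a lattice $L \subset \R \times H$ for some locally compact abelian internal group $H$, with a relatively compact window. The dual cut-and-project construction gives a corresponding description of the spectrum in which the roles of $\R$ and $H$ are interchanged. If $H$ were nontrivial, the projection of the dual lattice $L^*$ would be dense in $\R$, contradicting uniform discreteness of $S$. Hence $H$ must be trivial, $L$ is a one-dimensional lattice $\alpha \Z$ inside $\R$, and the compactness of the window in the physical space forces $\Lam$ into finitely many translates of $\alpha \Z$.

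The step I expect to be hardest is the implication $H = \{0\}$ from uniform discreteness of $S$. This is where the one-dimensionality of $\R$ is used essentially, since genuine cut-and-project quasicrystals do exist with ordered but dense diffraction spectra, so the argument cannot be purely formal. A careful analysis of the projection $L \to H$ together with a Weyl-type equidistribution argument for the dual projection should eventually supply the contradiction, but translating ``$S$ uniformly discrete'' into a quantitative obstruction to a nontrivial internal space is, I expect, the most delicate part. A secondary subtlety in the first step is ensuring that cancellations among the $\ft\mu(s)$ do not prevent the coefficients $|\ft\mu(s)|^2$ of $\ft\gamma$ from reflecting the full support $S$; this may require a preliminary normalization step to rule out degenerate configurations.
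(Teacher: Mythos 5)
This statement is imported verbatim from \cite{lo1, lo2}; the present paper does not prove it, so there is no in-paper proof to compare against, and the question is whether your outline could be made to work. It cannot, in the form given, because of a genuine gap at the heart of step (i). The ``classical principle'' you invoke --- that a positive pure-point measure whose spectrum is uniformly discrete must have uniformly discrete support --- is false, and Section~4 of this very paper supplies the counterexample. Take $\psi$ a Schwartz function with $\supp(\psi)\subset I/2$ and set $\ft\varphi=\psi*\tilde\psi$, so that both $\varphi\ge0$ and $\ft\varphi\ge0$ with $\supp(\ft\varphi)\subset I$. The associated cut-and-project measure $\mu$ of \eqref{1.2} is then a positive pure-point measure with uniformly discrete support $\Lambda(\Gamma,I)$, while its Fourier transform $\ft\mu$ given by \eqref{1.3} is a positive pure-point measure whose support is a dense subset of $p_1(\Gamma^*)$. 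Taking $\gamma := \ft\mu$, one has a positive pure-point measure with uniformly discrete spectrum whose support is dense --- precisely the situation your principle is supposed to rule out. (There is also a minor confusion here: the autocorrelation $\gamma=\mu*\tilde\mu$ is positive-\emph{definite} rather than positive, while $\ft\gamma$ is the positive one; but the counterexample applies either way.) Without this step you do not get the Meyer property of $\Lambda$, and the rest of the argument has nothing to stand on.

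Even granting the Meyer property, step (ii) has a second gap: you argue that a nontrivial internal space $H$ would make the projection of the dual lattice $L^*$ dense, ``contradicting uniform discreteness of $S$.'' But $S$ is the spectrum of the \emph{particular} measure $\mu$, not the projection of $L^*$, and Meyer's structure theorem only says $\Lambda$ is a subset of a model set --- it does not equip $\mu$ itself with a cut-and-project representation whose dual produces $S$. The conclusion that $S$ must swallow the dense projection needs its own argument and is in fact the crux of the matter. The actual proof in \cite{lo1,lo2} does not pass through the Meyer set / internal-space machinery at all; it works directly with the measure and its Fourier transform (translation-boundedness, the difference sets $\Lam-\Lam$, $S-S$, and an iterative argument culminating in a finite-rank and then a periodicity statement), and the positivity of the diffraction is used in a different and more delicate way. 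As written, your outline rests on a principle that the paper's own Section~4 disproves, so it cannot be repaired without replacing step (i) entirely.
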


Moreover, it was proved that such  a measure can be obtained from     Poisson's summation formula
by a finite number of shifts, multiplication on exponentials,    and taking linear combinations.
       A similar result was also proved for positive measures in $\R^n$.

      The goal of the present note is to establish the sharpness of this result,
      in the sense that  the  condition of uniform discreteness cannot be relaxed much.
      More precisely, we  prove the following:

\begin{thm}\label{thm1}
There is a (non-zero) real, signed measure $\mu$ on $\R$ satisfying \eqref{mes1} and \eqref{mes2}, such that
\begin{enumerate-math}
\item
\label{thm1:a}
$\Lam$ and $S$ are both discrete closed sets;
\item
\label{thm1:b}
$\Lam$ contains only finitely many elements of any arithmetic progression.
\end{enumerate-math}
\end{thm}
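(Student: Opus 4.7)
The plan is to exhibit $\mu$ as a rapidly convergent sum $\mu = \sum_{k\ge 1} \mu_k$ of ``elementary'' crystalline pieces $\mu_k$ which are approximately localized in both variables: in physical space most of the atoms of $\mu_k$ lie in a small window about a point $x_k$, and in Fourier space most of the atoms of $\hat\mu_k$ lie in a small window about a frequency $\xi_k$. If $x_k,\xi_k\to\infty$ rapidly enough and if the residual tails of the $\mu_k$ outside these windows can be made to cancel exactly when summed, then both $\Lam$ and $S$ will meet each bounded interval in only finitely many points, which is the discrete closed property in \ref{thm1:a}.

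For the individual blocks $\mu_k$ I would take a high-order finite difference of translates of the Poisson comb on a lattice $\alpha_k\Z$. Since finite differences in physical space correspond to multiplication by a trigonometric polynomial in Fourier, one can arrange that the Fourier transform of $\mu_k$, while supported on the full dual lattice $\alpha_k^{-1}\Z$, has atoms vanishing on any prescribed finite set of low frequencies and decaying elsewhere outside the intended window about $\xi_k$. A dilation plus modulation then shifts this surviving Fourier mass where one wants it, and a translation centers the physical support around $x_k$. Condition \ref{thm1:b}---that $\Lam$ meets every arithmetic progression in finitely many points---I would enforce by choosing the centers $x_k$ in general position (for instance, an irrational multiple of a fast-growing integer sequence), so that atoms from distinct blocks cannot lie on a common arithmetic progression except for finitely many accidental coincidences.

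The main obstacle is that no nontrivial pure point measure with pure point Fourier transform is genuinely compactly supported on either side: each $\mu_k$ inevitably has infinitely many atoms on a full lattice in both variables. So the heart of the proof is to exhibit an exact algebraic identity among the $\mu_k$'s (or equivalently, to choose the finite-difference coefficients in a coordinated way across $k$) so that the residual low-frequency and far-field atoms telescope or cancel in the infinite sum. Once this cancellation is in place, checking the discreteness of $\Lam$ and $S$ and the arithmetic-progression condition reduces to the combinatorial choice of the windows and centers; without it, the support or spectrum would merely be dense countable, not discrete closed.
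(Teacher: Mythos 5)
Your plan takes a genuinely different route from the paper. The paper's construction is a ``cut-and-project'' one: starting from a lattice $\Gamma\subset\R^2$, the support $\Lam$ and spectrum $S$ are obtained by projecting $\Gamma$ and $\Gamma^*$ through staircase-shaped windows $A$ and $A^*$, and the weights come from a Schwartz function $\varphi$, built by an inductive Paley--Wiener interpolation argument (using the Matei--Meyer theorem that model sets are interpolation sets) so that $\ft\varphi$ vanishes on the complementary projection $Q$ and $\varphi$ vanishes on $Z$. Property~\ref{thm1:b} then drops out of a clean lemma: the preimage in $\Gamma$ of an arithmetic progression lies on a single non-horizontal line, so choosing $a_n\gg h_n$ forces $A$ to meet every such line in a bounded set. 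Your approach --- summing dilated, modulated, finitely-differenced Poisson combs $\mu_k$ with windows drifting to infinity in both variables, arranged so that the off-window tails cancel --- is close in spirit to Kolountzakis's alternative proof, which the paper explicitly cites as another route to (the weaker form of) this result.

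However, there is a genuine gap at exactly the point you flag yourself. You write that ``the heart of the proof is to exhibit an exact algebraic identity among the $\mu_k$'s \dots so that the residual low-frequency and far-field atoms telescope or cancel,'' but you never produce such an identity, nor indicate why one should exist. This is not a routine detail: each $\mu_k$ is a crystalline measure with full-lattice support and spectrum, and getting infinitely many of them to cancel exactly outside drifting windows --- simultaneously in physical space and in Fourier, while keeping the sum temperate and translation-bounded --- is the entire difficulty. Without it the argument does not establish \ref{thm1:a}. Separately, your treatment of \ref{thm1:b} is too loose: each surviving block lives on a coset of some lattice $\alpha_k\Z$, so a fixed arithmetic progression $a+d\Z$ with $d/\alpha_k\in\Q$ can meet block $k$ in as many points as the window contains; choosing the centers $x_k$ ``in general position'' does not by itself control how many blocks meet a given progression. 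You would need a quantitative statement (analogous to Lemma~\ref{lem1.2} in the paper) linking the geometry of the windows and the moduli $\alpha_k$ to arithmetic progressions, and it is worth noting that the Poisson-comb route, as carried out by Kolountzakis, was only shown to give the weaker properties \ref{thm1:c}--\ref{thm1:d}, not \ref{thm1:b}.
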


The condition \ref{thm1:b} indicates  that the  measure $\mu$
is ``non-periodic'' in a strong sense. In particular,   $\mu$ cannot be obtained from
    Poisson's summation formula by the procedures mentioned above.
In Section \ref{sec:6} below  we discuss some additional properties
of the measure in our construction that illustrate its non-periodic nature.

\subsection*{Remarks}  \quad

1. It follows from \ref{thm1:b} that
$\Lam$ may not be covered by any finite union of arithmetic progressions.
We will see that in our example, this latter property is true for $S$ as well.

2. The measure  $\mu$ constructed in the proof, as well as
its Fourier transform  $\ft\mu$, are translation-bounded  measures on $\R$.


\section{Notation}
A set $\Lambda \subset \R$ is  a  discrete closed set  if it has finitely many points in every bounded interval.
The set $\Lambda$ is called {uniformly discrete} (u.d.) if
$|\lam-\lam'| \geq \delta(\Lam) > 0$ for any two distinct points
$\lam,\lam'\in\Lam$.

By a  ``measure'' on $\R$ we mean a complex, locally finite measure (usually infinite) which is
also a temperate distribution. A  measure $\mu$  is called translation-bounded if
\begin{equation}\label{2.9}
\sup_{x \in \R} \int_x^{x+1} | d\mu| < \infty.
\end{equation}

By  the ``support'' of a pure point measure $\mu$ we mean the  
countable set of the non-zero atoms of $\mu$. This should not be confused with the
notion of support in the sense of distributions, which is always a closed set.
     In the construction  below this difference will not be important, since
     both  $\mu$  and $\ft\mu$ are supported by discrete closed sets.

The Fourier transform on $\R$ will be normalized as follows:
$$\ft \varphi (t)=\int_{\R} \varphi (x) \, e^{-2\pi i t x} dx.$$
We denote by $\supp(\varphi)$ the closed support of a Schwartz function $\varphi$,
and by $\spec (\varphi)$ the closed support of its Fourier transform $\ft\varphi$.

If $\alpha$ is a temperate distribution then $\dotprod{\alpha}{\varphi}$  denotes the action of $\alpha$ 
on a Schwartz function $\varphi$.  The Fourier transform $\ft\alpha$ is defined by 
$\dotprod{\ft\alpha}{\varphi} = \dotprod{\alpha}{\ft\varphi}$.

By  a (full-rank) lattice $\Gam \subset \R^n$ we mean the image of $\Z^n$ under some
invertible linear transformation $T$. The determinant $\det (\Gam)$ is equal to $|\det (T)|$.
The dual lattice $\Gam^*$ is the set of all vectors $\gam^*$ such that $\dotprod{\gam}{\gam^*}
 \in \Z$, $\gam \in \Gam$.

\section{Interpolation in Paley-Wiener spaces}

\subsection{}

Let $\Omega$ be a bounded, measurable set in $\mathbb R$.
We denote by $PW_\Omega$ the Paley-Wiener space consisting of all functions $f \in L^2(\R)$ whose Fourier transform
vanishes a.e.\ on $\R \setminus \Omega$. Since $\Omega$ is bounded, the
elements of the space $PW_\Omega$ are entire functions of finite exponential type.

A countable set $\Lambda \subset \R$ is called an {interpolation set} for $PW_\Omega$ if for every sequence $\{c(\lambda)\} \in \ell^2(\Lambda)$ there exists at least one  $f \in PW_\Omega$ such that $f(\lambda) = c(\lambda)$, $\lambda \in \Lambda$. It is well-known that such
$\Lam$ must be a u.d.\ set, and there is a constant $K = K(\Lam, \Omega)$ such that  the solution $f$  may be chosen to satisfy
$\|f\|_{L^2(\mathbb  R)} \leq K \| \{ c(\lambda) \} \|_{\ell^2(\Lambda)}$
(the latter follows from standard results in functional analysis).

\subsection{}
We will need to interpolate by Schwartz functions with a given spectrum. 
Recall that the topology on the Schwartz space on $\mathbb R$ is determined by the family of seminorms
$$\|f\|_{m,k}:=\sup_{x\in\mathbb R}|x^m f^{(k)}(x)|\qquad (m,k\ge0).$$

\begin{lem}\label{lem3.1}
Let $\Lambda$ be an interpolation set for $PW_\Omega$ where $\Omega$ is a compact set in $\mathbb R,$ and let $\varepsilon>0$ be given. Then, for any sequence $\{c(\lambda)\},$ $\lambda\in\Lambda$, satisfying
\begin{equation}\label{3.0}
\sup_{\lambda\in\Lambda}|c(\lambda)|\cdot(1+|\lambda|)^N<\infty\qquad (N=1,2,3,\dots),
\end{equation}
one can find  a Schwartz function $f \in PW_{\Omega+[-\varepsilon,\varepsilon]}$ which
solves the interpolation problem
 $f(\lambda)=c(\lambda)$, $\lambda\in\Lambda$,
and moreover satisfies
\begin{equation}\label{3.1}
\|f\|_{m,k}\le C_{m,k} \sup_{\lambda\in\Lambda}|c(\lambda)|\cdot(1+|\lambda|)^m \quad (m,k \geq 0)
\end{equation}
for some positive constants $C_{m,k}=C_{m,k}(\Lambda,\Omega,\varepsilon)$ which do not depend on $\{c(\lambda)\}$.
\end{lem}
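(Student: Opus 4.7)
The strategy is to build, for every $\lambda\in\Lambda$, a Schwartz ``cardinal'' function concentrated near $\lambda$ that has the required spectrum, and then use the rapid decay of the data to sum these into the desired interpolant.

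First, since $\Lambda$ is an $L^2$-interpolation set for $PW_\Omega$, for each $\lambda\in\Lambda$ I solve the Kronecker interpolation problem and obtain $g_\lambda\in PW_\Omega$ with $g_\lambda(\lambda')=\delta_{\lambda,\lambda'}$ for all $\lambda'\in\Lambda$ and $\|g_\lambda\|_{L^2(\R)}\leq K$, where $K=K(\Lambda,\Omega)$ is independent of $\lambda$. The compactness of $\Omega$ then gives $\|g_\lambda\|_{L^\infty}\leq K|\Omega|^{1/2}$ (Cauchy--Schwarz on $\ft{g_\lambda}$, which is supported in $\Omega$), and Bernstein's inequality upgrades this to uniform bounds $\|g_\lambda^{(j)}\|_{L^\infty}\leq B_j$ independently of $\lambda$.

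Next, fix once and for all a Schwartz function $\varphi$ with $\spec(\varphi)\subset[-\varepsilon,\varepsilon]$ and $\varphi(0)=1$, and set $\tilde g_\lambda(x):=g_\lambda(x)\,\varphi(x-\lambda)$. Then $\tilde g_\lambda$ is Schwartz (bandlimited-bounded times Schwartz), has $\spec(\tilde g_\lambda)\subset\Omega+[-\varepsilon,\varepsilon]$, and still satisfies $\tilde g_\lambda(\lambda')=\delta_{\lambda,\lambda'}$. The Leibniz formula combined with the uniform bounds on the $g_\lambda^{(j)}$ and the Schwartz decay of the $\varphi^{(j)}$ yields, for every $N,k\geq 0$,
\[
(1+|x-\lambda|)^N\,|\tilde g_\lambda^{(k)}(x)|\leq A_{N,k},\qquad x\in\R,
\]
with $A_{N,k}$ independent of $\lambda$.

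Finally, define $f(x):=\sum_{\lambda\in\Lambda}c(\lambda)\,\tilde g_\lambda(x)$. By construction $f(\lambda')=c(\lambda')$ for every $\lambda'\in\Lambda$ and $\spec(f)\subset\Omega+[-\varepsilon,\varepsilon]$. To derive \eqref{3.1}, use the elementary bound $(1+|x|)\leq(1+|x-\lambda|)(1+|\lambda|)$ to estimate
\[
(1+|x|)^m\,|c(\lambda)|\,|\tilde g_\lambda^{(k)}(x)|\;\leq\;A_{m+2,k}\,(1+|\lambda|)^m|c(\lambda)|\cdot(1+|x-\lambda|)^{-2},
\]
and sum in $\lambda\in\Lambda$, exploiting the uniform discreteness of $\Lambda$ to bound $\sum_{\lambda\in\Lambda}(1+|x-\lambda|)^{-2}$ uniformly in $x$. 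Both the absolute uniform convergence of the series defining $f$ and all its derivatives, and the claimed seminorm bounds, follow at once. The one delicate point is the \emph{uniformity in $\lambda$} of the constants $A_{N,k}$ in the middle step; this is exactly where one needs the uniform $L^2$-bound for $g_\lambda$ (a standard open-mapping consequence already recorded in the excerpt) and Bernstein's inequality with a fixed exponential type (which works because $\Omega$ is compact).
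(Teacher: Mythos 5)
Your proposal is correct and takes essentially the same approach as the paper: you construct the same family of Schwartz cardinal functions $g_\lambda(x)\varphi(x-\lambda)$ (the paper writes $\Phi(x-\lambda)\varphi_\lambda(x)$), obtain uniform-in-$\lambda$ bounds from the uniform $L^2$ control plus compactness of $\Omega$ (Bernstein), and sum using the rapid decay of the data together with the uniform discreteness of $\Lambda$. The only cosmetic difference is that you absorb the dependence on $x$ via the bound $(1+|x|)\le(1+|x-\lambda|)(1+|\lambda|)$ and a summable tail $(1+|x-\lambda|)^{-2}$, whereas the paper packages the same estimate into the constants $L_{m,j}$ and $M_{k-j}$ and a Leibniz expansion; these are equivalent.
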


\begin{proof}
Choose functions $\varphi_\lambda\in PW_\Omega$ $(\lambda\in\Lambda)$
satisfying
$$\varphi_\lambda(\lambda)=1,\qquad \varphi_\lambda(\lambda')=0\quad (\lambda'\in \Lambda,\ \lambda'\ne\lambda)$$
and 
$$\sup_{\lambda\in \Lambda}\|\varphi_\lambda\|_{L^2(\mathbb  R)}<\infty.$$
Observe that this implies that
\begin{equation}\label{3.2}
M_k:=\sup_{\lambda\in\Lambda}\|\varphi_\lambda^{(k)}\|_\infty<\infty
\end{equation}
for every $k\ge0$.
Choose a Schwartz function $\Phi$ such that $\Phi(0)=1$ and $\spec(\Phi)\subset(-\varepsilon,\varepsilon).$ Since $\Lambda$ is a u.d.\ set we have that
\begin{equation}\label{3.3}
L_{m,j}:=\sup_{x\in\mathbb R} |x|^m\sum_{\lambda\in\Lambda}(1+|\lambda|)^{-m}|\Phi^{(j)}(x-\lambda)| <\infty
\end{equation}
for every $m,j\ge0.$ Using \eqref{3.0}, \eqref{3.2} and \eqref{3.3}, we see that the function
$$f(x)=\sum_{\lambda\in\Lambda}c(\lambda) \Phi(x-\lambda) \varphi_\lambda(x)$$
is a Schwartz function in $PW_{\Omega+[-\varepsilon,\varepsilon]}$ and satisfies \eqref{3.1} with
$$C_{m,k}=\sum_{j=0}^k\binom{k}{j}L_{m,j}M_{k-j}.$$
Clearly  $f$ solves the interpolation problem, so this proves the lemma.
\end{proof}


 \section{The projection method}

\subsection{} 
 Let $\Gamma$ be a lattice in $\mathbb R^2.$ Consider the projections $p_1(x,y)=x$ and $p_2(x,y)=y,$ and assume that the restrictions of $p_1$ and $p_2$ to $\Gamma$ are injective, and so their images are dense in $\mathbb R.$
Let $\Gamma^*$ be the dual lattice, then the restrictions of $p_1$ and $p_2$ to $\Gamma^*$ are also injective and have dense images.

If $I$ is  a bounded interval in $\R$, then the set
 \begin{equation}\label{1.5}
\Lambda(\Gamma,I):=\{p_1(\gamma):\gamma\in\Gamma,\, p_2(\gamma)\in I\}
 \end{equation}
is called a ``model set'', or a ``cut-and-project'' set. Meyer observed  \cite[p.\ 30]{mey0} (see also \cite{mey2}) 
that these sets provide  examples of non-periodic  u.d.\ sets which
support a measure $\mu$, whose Fourier transform is also a pure point measure. Such a measure may be obtained by
choosing a Schwartz function $\varphi$  with $\supp(\ft\varphi) \subset I$, and taking
 \begin{equation}\label{1.2}
 \mu=\sum_{(x,y)\in\Gamma}\hat\varphi(y)\delta_x.
 \end{equation}
The Fourier transform  of $\mu$ is then the measure
 \begin{equation}\label{1.3}
 \hat \mu=\frac{1}{\det\Gamma}\sum_{(u,v)\in\Gamma^*}\varphi(v)\delta_u.
 \end{equation}
However, $\varphi$ cannot also be supported on a bounded interval, and the
support of the pure point measure $\ft\mu$ is generally everywhere dense in $\R$.

Our approach is inspired by Meyer's construction, but an essential difference is that in our
example, neither $\varphi$ nor $\ft\varphi$ will have a bounded support. 
We will nevertheless see that by a special choice of the function $\varphi$,   the measure 
\eqref{1.2} and its Fourier transform \eqref{1.3} can each be supported by a discrete closed set,
obtained by a certain generalization of the cut-and-project construction.

\subsection{} 
For completeness of the exposition, we formulate the correspondence between the measures
\eqref{1.2} and \eqref{1.3}  for a general Schwartz function $\varphi$, and include the short proof.

 \begin{lem}\label{lem1.1}
 Let $\varphi$ be a Schwartz function on $\mathbb R.$ Then \eqref{1.2} defines
a translation-bounded measure $\mu$ on $\mathbb R,$ whose Fourier transform is the (also translation-bounded) measure \eqref{1.3}.
 \end{lem}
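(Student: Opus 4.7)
The plan is to verify translation-boundedness of $\mu$ by a direct count, and then derive the Fourier relation from the two-dimensional Poisson summation formula applied to a tensor product of Schwartz functions.

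First I would check that the sum defining $\mu$ is locally finite with uniformly bounded local mass. Since $\Gamma$ is a lattice in $\R^2$, it is uniformly discrete there, so the number of points of $\Gamma$ in any axis-aligned unit square $[a,a+1]\times[N,N+1]$ is bounded by some constant $C=C(\Gamma)$ independent of $a,N\in\R$. Combined with the Schwartz estimate $|\ft\varphi(y)|\le C_k(1+|y|)^{-k}$ for every $k$, this gives
\[
\int_a^{a+1}|d\mu|=\sum_{\substack{(x,y)\in\Gamma\\ x\in[a,a+1]}}|\ft\varphi(y)|\le C\sum_{N\in\Z}\sup_{|y-N|\le 1}|\ft\varphi(y)|,
\]
which is finite and independent of $a$. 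So $\mu$ is a translation-bounded measure, in particular a tempered distribution.

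Next I would compute $\ft\mu$ in the distributional sense. Fix a Schwartz test function $f$ on $\R$ and consider the tensor product
\[
F(x,y)=\ft f(x)\,\ft\varphi(y),
\]
which is a Schwartz function on $\R^2$, with two-dimensional Fourier transform
\[
\ft F(u,v)=f(-u)\,\varphi(-v).
\]
By the very definition of $\mu$, the pairing $\langle\mu,\ft f\rangle$ equals the absolutely convergent sum $\sum_{\gamma\in\Gamma}F(\gamma)$. I would then apply the Poisson summation formula for the lattice $\Gamma\subset\R^2$ to get
\[
\sum_{\gamma\in\Gamma}F(\gamma)=\frac{1}{\det\Gamma}\sum_{\gamma^*\in\Gamma^*}\ft F(\gamma^*)=\frac{1}{\det\Gamma}\sum_{(u,v)\in\Gamma^*}f(-u)\,\varphi(-v),
\]
and reindex via $\gamma^*\mapsto-\gamma^*$ (using $-\Gamma^*=\Gamma^*$) to identify this with $\langle\nu,f\rangle$, where $\nu$ is the measure on the right-hand side of \eqref{1.3}. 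Since this holds for every Schwartz $f$, we obtain $\ft\mu=\nu$ as tempered distributions.

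Finally, translation-boundedness of $\ft\mu$ follows from exactly the same counting argument applied to the lattice $\Gamma^*$ and the Schwartz function $\varphi$ in place of $\Gamma$ and $\ft\varphi$. I do not expect any real obstacle here: the only small point to check cleanly is the reindexing at the end and the absolute convergence justifying Poisson summation, both of which are immediate from the Schwartz decay of $F$ and $\ft F$.
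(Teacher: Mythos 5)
Your proof is correct and follows essentially the same route as the paper: counting lattice points in unit squares combined with the Schwartz decay of $\ft\varphi$ for translation-boundedness, and the two-dimensional Poisson summation formula applied to the tensor product $\ft f(x)\ft\varphi(y)$ for the Fourier transform identity. The only cosmetic difference is that you make the reindexing $\gamma^*\mapsto-\gamma^*$ explicit, which the paper's one-line invocation of Poisson summation absorbs silently.
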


\begin{proof}
Fix $M>0$ such that every cube with side length $1$ contains at most $M$ points of $\Gam$.
For $x\in\R$ consider the cubes
$B_k(x) := [x, x+1] \times [k, k+1)$, $k\in\Z.$
Then
$$\int_x^{x+1} | d\mu| 
= \sum_{k\in\Z} \sum_{\gam\in \Gam\cap B_k(x)} |\ft\varphi(p_2(\gam))|
\leq M\sum_{k\in\Z} \sup_{y \in [k,k+1)} |\ft\varphi(y)| =: C(\Gam, \varphi) < \infty.$$
Hence $\mu$
 is a translation-bounded measure, and in the same way one can show that 
the measure in \eqref{1.3} is also  translation-bounded. It remains to show that
the latter measure is indeed the Fourier transform of $\mu$.

 Let $\psi$ be a Schwartz function on $\mathbb R.$ Then 
$$ \dotprod{\ft\mu}{\psi} =  \dotprod{\mu}{\ft\psi} =\sum_{(x,y)\in\Gamma}
\ft\psi(x) \ft\varphi(y) = \frac{1}{\det \Gam} \sum_{(u,v)\in\Gamma^*} \psi(u) \varphi(v),$$
where the last equality follows from Poisson's summation formula.
As this holds for every Schwartz function $\psi$, this confirms \eqref{1.3}.
\end{proof}

\subsection{}
Model sets also play an interesting role in the interpolation theory in Paley-Wiener spaces.
It was proved in \cite{ou1,ou2} that there exist ``universal'' sets $\Lam$ of positive density,
which serve as a set of interpolation for  $PW_\Omega$ 
whenever $\Omega$ is a finite union of intervals with sufficiently large measure.
An example of such universal interpolation sets can also be obtained by the ``cut-and-project'' construction:

  \begin{theorem-m}[\cite{matei-meyer-quasicrystals,matei-meyer-simple}]
Let $I$ be a bounded interval in $\mathbb R$. Then the set
$\Lambda(\Gamma,I)$ defined by \eqref{1.5} is an interpolation set for $PW_\Omega$ whenever $\Omega$ is a finite
union of intervals such that
$$\mes(\Omega)>\frac{|I|}{\det \Gamma}.$$
\end{theorem-m}
Here $|I|$ denotes the length of the interval $I$.


\section{The construction}

\subsection{}
Suppose that we are given a sequence of real numbers
$$0=a_0<a_1<a_2<a_3<\cdots,\qquad a_n\to\infty\quad (n\to\infty)$$
and also another sequence
$$0<h_1<h_2<h_3<\cdots,\qquad h_n\to\infty\quad (n\to\infty).$$
We partition the plane $\mathbb R^2$ into two disjoint sets $A,B$ defined by
 \begin{equation}\label{9.1}
A=\cupl_{n=1}^\infty\{(x,y):|x|\ge a_{n-1},\, |y|\le h_n\},
 \end{equation}
 \begin{equation}\label{9.2}
B=\cupl_{n=1}^\infty\{(x,y):|x|< a_{n},\, |y|> h_n\},
 \end{equation}
and consider the two sets
$$\Lambda:=\{p_1(\gamma):\gamma\in\Gamma\cap A\},\\[2pt]$$
$$Q:=\{p_2(\gamma):\gamma\in\Gamma\cap B\}.$$
Observe that $\Lambda$ and $Q$ are both discrete closed sets in $\mathbb R$
(see Figure \ref{fig:lattice}).
Also observe that if $\varphi$ is a Schwartz function such that $\hat\varphi$ vanishes on $Q$, then the support of the measure $\mu$ in \eqref{1.2} is contained in $\Lambda.$


\begin{figure}[htb]
\centering
\begin{tikzpicture}[scale=.75, projeksjonsprikk/.style={red!75!white}, latticefarge/.style={blue!75!white}]
\draw (-4.7, 0) -- (4.7, 0);
\draw (0, -4.7) -- (0, 4.7);

\tikzset{cross/.style={cross out, draw=black, minimum size=6*(#1-\pgflinewidth), inner sep=0pt, outer sep=0pt}, cross/.default={1pt}}
\foreach \x in {-1, ..., 1} {
  \foreach \y in {-1, ..., 1} {
		\draw (-1*\x-2*\y, - 2.5*\x +1.25*\y ) node[cross] {};
  }
}

\draw (3,4) -- (3,3) -- (2,3) -- (2,2) -- (1,2) -- (1,1) -- (-1,1) -- (-1,2) -- (-2,2) -- (-2,3) -- (-3,3) -- (-3,4);
\draw (3,-4) -- (3,-3) -- (2,-3) -- (2,-2) -- (1,-2) -- (1,-1) -- (-1,-1) -- (-1,-2) -- (-2,-2) -- (-2,-3) -- (-3,-3) -- (-3,-4);

\draw[-latex] (1-2+0.08, 2.5+1.25) -- (-0.08, 2.5+1.25);
\draw[-latex] (-1+2-0.08, -2.5-1.25) -- (0.08, -2.5-1.25);
\draw[-latex] (1-0.08, 2.5) -- (0.08, 2.5);
\draw[-latex] (-1+0.08, -2.5) -- (-0.08, -2.5);

\fill (0, 2.5+1.25) circle (0.06);
\fill (0, -2.5-1.25) circle (0.06);
\fill (0, -2.5) circle (0.06);
\fill (0, 2.5) circle (0.06);

\draw[-latex] (-1-2, -2.5+1.25+0.08) -- (-1-2, -0.08);
\draw[-latex] (1+2, 2.5-1.25-0.08) -- (1+2, 0.08);
\draw[-latex] (-2, 1.25-0.08) -- (-2, 0.08);
\draw[-latex] (2, -1.25+0.08) -- (2, -0.08);

\fill (-1-2,0) circle (0.06);
\fill (1+2,0) circle (0.06);
\fill (-2, 0) circle (0.06);
\fill (2, 0) circle (0.06);

\fill (0, 0) circle (0.06);

\fill (3,3) circle (0.06);
\draw (3,3) node[right] {$(a_n, h_n)$};

\draw (0, 4.7) node[above] {$Q$};
\draw(4.7, 0) node[right] {$\Lambda$};
\draw(-4,2) node[scale=1] {$\Gamma$}; 

\end{tikzpicture}
\caption{Construction of $\Lam$ and $Q$ from the lattice $\Gam$.\label{fig:lattice}}
\end{figure}


Suppose now that we are given two other sequences $\{a_n^*\}, $ $\{h_n^*\}$ with properties similar to $\{a_n\},$ $\{h_n\},$ and that these two sequences determine a partition of $\mathbb R^2$ into two disjoint sets $A^*,$ $B^*$ defined similarly to $A,B.$ Let
$$S:=\{p_1(\gamma^*):\gamma^*\in\Gamma^*\cap A^*\},$$
$$Z:=\{p_2(\gamma^*):\gamma^*\in\Gamma^*\cap B^*\}.$$
As before, $S$ and $Z$ are two discrete closed sets in $\mathbb R$
(see Figure \ref{fig:dual}).
 And, if $\varphi$ vanishes on $Z,$ then according to \eqref{1.3} the spectrum of the measure $\mu$ is contained in $S.$


\begin{figure}[htb]
\centering
\begin{tikzpicture}[scale=.75, projeksjonsprikk/.style={red!75!white}, latticefarge/.style={blue!75!white}]
\draw (-4.7, 0) -- (4.7, 0);
\draw (0, -4.7) -- (0, 4.7);

\tikzset{cross/.style={cross out, draw=black, minimum size=6*(#1-\pgflinewidth), inner sep=0pt, outer sep=0pt}, cross/.default={1pt}}
\foreach \x in {-1, ..., 1} {
  \foreach \y in {-1, ..., 1} {
		\draw (1*\x+2*\y, - 2.5*\x +1.25*\y ) node[cross] {};
  }
}

\draw (3,4) -- (3,3) -- (2,3) -- (2,2) -- (1,2) -- (1,1) -- (-1,1) -- (-1,2) -- (-2,2) -- (-2,3) -- (-3,3) -- (-3,4);
\draw (3,-4) -- (3,-3) -- (2,-3) -- (2,-2) -- (1,-2) -- (1,-1) -- (-1,-1) -- (-1,-2) -- (-2,-2) -- (-2,-3) -- (-3,-3) -- (-3,-4);

\draw[-latex] (-1+2-0.08, 2.5+1.25) -- (0.08, 2.5+1.25);
\draw[-latex] (1-2+0.08, -2.5-1.25) -- (-0.08, -2.5-1.25);
\draw[-latex] (-1+0.08, 2.5) -- (-0.08, 2.5);
\draw[-latex] (1-0.08, -2.5) -- (0.08, -2.5);

\fill (0, 2.5+1.25) circle (0.06);
\fill (0, -2.5-1.25) circle (0.06);
\fill (0, -2.5) circle (0.06);
\fill (0, 2.5) circle (0.06);

\draw[-latex] (1+2, -2.5+1.25+0.08) -- (1+2, -0.08);
\draw[-latex] (-1-2, 2.5-1.25-0.08) -- (-1-2, 0.08);
\draw[-latex] (2, 1.25-0.08) -- (2, 0.08);
\draw[-latex] (-2, -1.25+0.08) -- (-2, -0.08);

\fill (-1-2,0) circle (0.06);
\fill (1+2,0) circle (0.06);
\fill (-2, 0) circle (0.06);
\fill (2, 0) circle (0.06);

\fill (0, 0) circle (0.06);

\fill (3,3) circle (0.06);
\draw (3,3) node[right] {$(a_n^*, h_n^*)$};

\draw (0, 4.7) node[above] {$Z$};
\draw(4.7, 0) node[right] {$S$};
\draw(-4,2) node[scale=1] {$\Gamma^*$}; 

\end{tikzpicture}
\caption{A similar construction of $S$ and $Z$ from the dual lattice $\Gam^*$.\label{fig:dual}}
\end{figure}


\subsection{}
Our goal will thus be  to construct sequences $\{a_n\},$ $\{h_n\}$ and $\{a_n^*\},$ $\{h_n^*\}$
 with the properties above, and a Schwartz function $\varphi$ (not identically zero),
such that $\varphi$ and $\hat \varphi$ are both real-valued, $\varphi$ vanishes on $Z$ and $\hat\varphi$ vanishes on $Q$.
As we have seen, this would give a non-zero measure $\mu$ satisfying  property \ref{thm1:a} in  \thmref{thm1}.

We  choose the sequences $\{a_n\}, $ $\{h_n\}$ in an arbitrary way, and this choice defines the sets $\Lambda$ and $Q.$

We  also choose the $\{a_n^*\}$ arbitrarily, and for each $n\geq 0$ we let $\Omega_n$ be a finite union of closed intervals in $\R$, such that
$$\Omega_n=-\Omega_n,\qquad \Omega_n\subset\mathbb R\setminus Q,\qquad \mes(\Omega_n)>\frac{2a_n^*}{\det\Gamma^*},\qquad
\Omega_0\subset \Omega_1\subset \Omega_2\subset\cdots$$
(it may be convenient here to notice that $-Q=Q).$

Let  $\varphi_0$ be a Schwartz function that is real-valued, even, 
$$\varphi_0(0)=1,\quad\spec(\varphi_0)\subset\Omega_0.$$

Now we construct by induction on $n$ the sequence $\{h_n^*\}$ and Schwartz functions $\varphi_n,$ real-valued and even, such that

\begin{enumerate-math-abc}
\item
\label{item:nonzero}
$\varphi_n(0)=1;$
\item
\label{item:spec}
$\spec(\varphi_n)\subset\Omega_n;$
\item
\label{item:approx}
$\|\varphi_n-\varphi_{n-1}\|_{m,k}<2^{-n}$ \quad $(0\le m,k\le n);$
\item
\label{item:vanish}
$\varphi_n$ vanishes on $Z_n,$ where
$$Z_n:=\{p_2(\gamma^*):\gamma^*\in\Gamma^*\cap B_n^*\},$$
$$B_n^*:=\cupl_{k=1}^n\{(x,y):|x|<a_k^*,\, |y|>h_k^*\}.$$
\end{enumerate-math-abc}

Property \ref{item:approx} ensures that $\varphi_n$ converges in the Schwartz space to a limit $\varphi$, real-valued and even, not identically zero by \ref{item:nonzero}, which vanishes on $Z$ due to \ref{item:vanish}, and such that $\hat \varphi$ vanishes on $Q$ due to \ref{item:spec}. So  the measure $\mu$ in \eqref{1.2} has support in $\Lambda$ and spectrum in $S$ as required.

To construct the number $h_n^*$ and the function $\varphi_n$ at the $n$'th step of the induction,
we let $J$ denote a finite union of closed intervals such that
$$\mes(J)>\frac{2a_n^*}{\det\Gamma^*},\qquad J+[-\varepsilon,\varepsilon]\subset\Omega_n$$
for an appropriate $\varepsilon>0$. Now consider the model set
$$X = X_n :=\{p_2(\gamma^*):\gamma^*\in\Gamma^*,\, |p_1(\gamma^*)|<a_n^*\}.$$
By Theorem~M it is an interpolation set for $PW_J.$
Define
$$C:=\sup_{0\le m,k\le n} C_{m,k}(X,J,\varepsilon)$$
where $C_{m,k}(X,J,\varepsilon)$ is the constant from \lemref{lem3.1}.  

Since $\varphi_{n-1}$ is a Schwartz function, we have
$$\sup_{\lambda\in X}|\varphi_{n-1}(\lambda)|\cdot(1+|\lambda|)^N<\infty\qquad (N=1,2,3,\dots).$$
We choose the number $h_n^*$ sufficiently large such that
$$\sup_{\lambda\in X,|\lambda|>h_n^*}|\varphi_{n-1}(\lambda)|\cdot(1+|\lambda|)^n<\frac{1}{C\cdot 2^n}\;,$$
and consider the interpolation problem
\begin{equation}\label{5.1}
f(\lambda)=c(\lambda),\qquad \lambda\in X,
\end{equation}
where
$$c(\lambda):=\begin{cases}
0,&\quad \lambda\in X,\ |\lambda|\le h_n^*,\\
\varphi_{n-1}(\lambda),&\quad \lambda\in X,\ |\lambda|>h_n^*.\end{cases}$$
By \lemref{lem3.1}, there is a Schwartz function $f$  satisfying \eqref{5.1} such that 
$$\spec(f)\subset J+[-\varepsilon,\varepsilon]\subset\Omega_n$$
and
$$\sup_{0\le m,k\le n}\|f\|_{m,k}<2^{-n}.$$
Since $\{c(\lambda)\}$ is a real-valued, even sequence, by replacing $f$ with
$$\Re\left[\frac{f(t)+f(-t)}{2}\right]$$
we may assume that $f$ is real-valued and even. We then take
$$\varphi_n:=\varphi_{n-1}-f.$$

It is clear that $\varphi_n$ satisfies conditions \ref{item:spec} and \ref{item:approx} above.
To check that \ref{item:nonzero} and \ref{item:vanish} are also satisfied, we first use the fact that $f(\lambda)=0$ for $\lambda\in X,$ $|\lambda|\le h_n^*.$ It implies that $\varphi_n(0)=\varphi_{n-1}(0)=1$ and 
$$\varphi_n(\lambda)=\varphi_{n-1}(\lambda)=0, \quad \lambda\in Z_n\cap[-h_n^*,h_n^*],$$
 where the latter  is true because $Z_n\cap[-h_n^*,h_n^*] \subset  Z_{n-1}$
 and $\varphi_{n-1}$ vanishes on $Z_{n-1}$.
 On the other hand, since $f(\lambda)=\varphi_{n-1}(\lambda)$ for $\lambda\in X,$ $|\lambda|>h_n^*,$ we obtain also
$$\varphi_n(\lambda)=0, \quad \lambda\in Z_n\setminus[-h_n^*,h_n^*].$$
 This confirms that conditions \ref{item:nonzero}--\ref{item:vanish} hold, and completes the inductive construction.


\section{Arithmetic progressions}
\label{sec:6}

\subsection{}
To complete the proof of \thmref{thm1}, it remains to show how to satisfy 
property \ref{thm1:b} in the construction above. For this we  need
the following proposition, which  provides the relation of the construction 
 to arithmetic progressions.

 \begin{lem}\label{lem1.2}
Let $P$ be an arithmetic progression in $\R$. Then the set
$$\{(x,y)\in\Gamma : x \in P\}$$
is contained in a straight line in $\R^2$ which is not parallel to the $x$-axis.
 \end{lem}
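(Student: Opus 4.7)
The plan is to translate the problem into a three-variable linear Diophantine equation and read off the structure of its solution set. Fix a $\Z$-basis $e_1=(\alpha_1,\beta_1)$, $e_2=(\alpha_2,\beta_2)$ of $\Gam$, and write any $\gamma\in\Gam$ as $\gamma=n_1 e_1+n_2 e_2$ with $n_j\in\Z$. Writing $P=\{a+kd:k\in\Z\}$ with $d\ne 0$, the condition $p_1(\gamma)\in P$ becomes
\[
n_1\alpha_1+n_2\alpha_2-kd=a,\qquad (n_1,n_2,k)\in\Z^3.
\]

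Let $\Lam_0\subset\Z^3$ be the $\Z$-module of solutions of the homogeneous equation $m_1\alpha_1+m_2\alpha_2-ld=0$. The key point is that injectivity of $p_1|_\Gam$ forces the coordinate projection $\Lam_0\to\Z$, $(m_1,m_2,l)\mapsto l$, to be injective: any element of its kernel satisfies $m_1\alpha_1+m_2\alpha_2=p_1(m_1 e_1+m_2 e_2)=0$, hence $m_1=m_2=0$. In particular, $\Lam_0$ is free of rank at most $1$. If its rank is $0$, the inhomogeneous equation has at most one solution, so the set in question has at most one point and is trivially contained in some non-horizontal line.

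Otherwise $\Lam_0=\Z\cdot(m_1,m_2,l)$ with a non-zero generator, and by the preceding paragraph necessarily $l\ne 0$, whence also $(m_1,m_2)\ne(0,0)$, because otherwise $-ld=0$ would force $l=0$. When non-empty, the solution set of the inhomogeneous equation is an affine coset of $\Lam_0$, so the lattice points $\gamma=n_1 e_1+n_2 e_2$ with $p_1(\gamma)\in P$ all lie on the affine line $\gamma_0+\R\gamma'$, where $\gamma':=m_1 e_1+m_2 e_2$. Since $e_1,e_2$ are $\R$-linearly independent and $(m_1,m_2)\ne(0,0)$, we have $\gamma'\in\Gam\setminus\{0\}$. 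Now invoke the second injectivity hypothesis: since $p_2|_\Gam$ is injective, $p_2(\gamma')\ne 0$, i.e.\ the direction vector has non-zero $y$-component, so the line is not parallel to the $x$-axis.

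The only conceptual point is to see that the two injectivity hypotheses on $\Gam$ play complementary roles: injectivity of $p_1|_\Gam$ bounds the rank of the homogeneous Diophantine system and forces colinearity of the candidate $\gamma$'s, while injectivity of $p_2|_\Gam$ is precisely what rules out a horizontal line at the end. Everything else is routine.
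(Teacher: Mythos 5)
Your proof is correct, and it takes a genuinely different route from the paper. The paper works directly with three arbitrary points $\gam_1,\gam_2,\gam_3$ of the set: writing $p_1(\gam_i)=a+k_id$, it produces a non-trivial integer relation $m_1p_1(\gam_1)+m_2p_1(\gam_2)+m_3p_1(\gam_3)=0$ with $m_1+m_2+m_3=0$ (e.g.\ $m_1=k_2-k_3$, etc.), pushes it through the injectivity of $p_1|_\Gam$ to get $m_1\gam_1+m_2\gam_2+m_3\gam_3=0$, and reads off collinearity since the coefficients sum to zero; injectivity of $p_2|_\Gam$ then excludes a horizontal line. Your approach instead sets up the single Diophantine equation $n_1\alpha_1+n_2\alpha_2-kd=a$, identifies the solution set as a coset of the homogeneous solution module $\Lam_0\subset\Z^3$, uses injectivity of $p_1|_\Gam$ to bound $\Lam_0$ to rank $\le 1$, and then uses injectivity of $p_2|_\Gam$ to show the generator's direction is not horizontal. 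The paper's argument is shorter and needs no auxiliary coordinates, but leaves the existence of the integers $m_i$ implicit; yours is more systematic, makes every step explicit, and in the rank-$0$ case handles degenerate situations (fewer than two solutions) without comment being needed. Both proofs use the two injectivity hypotheses in the same complementary roles, so the ideas are parallel even if the packaging differs.
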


\begin{proof}
Suppose  that $\gam_1, \gam_2, \gam_3$ are three distinct points in $\Gam$, whose images under $p_1$ lie in 
$P$. Then there are non-zero integers  $m_1, m_2, m_3$ such that
$m_1+ m_2+ m_3= 0$ and
\[
m_1 p_1(\gam_1) + m_2 p_1(\gam_2) + m_3 p_1(\gam_3) =0.
\]
Since $p_1$ restricted to $\Gamma$ is injective, this implies that
\[
m_1 \gam_1 + m_2 \gam_2 + m_3 \gam_3 =0,
\]
hence the points $\gam_1, \gam_2, \gam_3$ lie on a line in $\R^2$.
Since $p_2$ restricted to $\Gamma$ is also injective, it follows that
the line is not parallel to the $x$-axis.
\end{proof}

Now recall that the sequences $\{a_n\}, $ $\{h_n\}$ have been chosen in an arbitrary way.
To satisfy  property \ref{thm1:b} we choose them such  that the domain $A$ defined
by \eqref{9.1}  contains only a bounded part of any straight line not parallel to the $x$-axis
(for this it is enough that  $a_n$ grows much faster than  $h_n$). 
By \lemref{lem1.2} this implies that
$\Lam$ contains only finitely many elements of any arithmetic progression.
So \thmref{thm1} is proved.

\subsection{}
There are several other ways to illustrate the non-periodic nature of the   measure in our example,
in addition to property \ref{thm1:b} stated above. 
Observe that \ref{thm1:b} implies:

\begin{enumerate-math}
\setcounter{enumi}{2}
\item
\label{thm1:c}
$\Lam$ may not be covered by any finite union of arithmetic progressions.
\end{enumerate-math}

\noindent
This property is true for the spectrum as well, namely:

\begin{enumerate-math}
\setcounter{enumi}{3}
\item
\label{thm1:d}
Also $S$ may not be covered by a finite union of arithmetic progressions.
\end{enumerate-math}

\noindent
In fact, the properties \ref{thm1:c} and \ref{thm1:d} hold due to the following

 \begin{prop}\label{prop1.3}
The support of any measure $\mu$ of the form \eqref{1.2} may not be covered
by a finite union of arithmetic progressions, and the same is true for the support of the measure $\hat\mu$,
unless the function $\varphi$ vanishes identically.
 \end{prop}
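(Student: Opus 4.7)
The plan is to prove the contrapositive for each statement. Assume first that the support of $\mu$ is covered by a finite union of arithmetic progressions $P_1, \ldots, P_k$. By \lemref{lem1.2}, for each $j$ the lattice points of $\Gam$ projecting into $P_j$ are contained in a straight line $\ell_j \subset \R^2$ which is not parallel to the $x$-axis. Since $p_1$ is injective on $\Gam$, each atom $x$ of $\mu$ corresponds to a unique $\gam = (x,y) \in \Gam$ with coefficient $\ft\varphi(y) \ne 0$; hence the assumption forces
$$\ft\varphi(p_2(\gam)) = 0 \quad \text{for every } \gam \in \Gam \setminus (\ell_1 \cup \cdots \cup \ell_k).$$

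The heart of the argument is to show that $E := p_2\bigl(\Gam \setminus (\ell_1 \cup \cdots \cup \ell_k)\bigr)$ is dense in $\R$. Writing $\Gam = \Z v_1 + \Z v_2$, the injectivity of $p_2$ on $\Gam$ forces $p_2(v_1)/p_2(v_2) \notin \Q$, so $p_2(\Gam)$ is a dense subgroup of $\R$. On the other hand, each $\ell_j$ is not parallel to the $x$-axis, so $p_2$ restricted to $\ell_j$ is a linear homeomorphism onto $\R$; as $\Gam \cap \ell_j$ is discrete in $\ell_j$, its image $p_2(\Gam \cap \ell_j)$ is a discrete closed subset of $\R$ (in fact either empty, a singleton, or an arithmetic progression). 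Hence $\bigcup_j p_2(\Gam \cap \ell_j)$ is locally finite, and using the injectivity of $p_2$ on $\Gam$,
$$E = p_2(\Gam) \setminus \bigcup_{j=1}^k p_2(\Gam \cap \ell_j),$$
which is dense in $\R$: every bounded interval meets $p_2(\Gam)$ infinitely often but intersects the removed locally finite set in only finitely many points. Since $\ft\varphi$ is continuous and vanishes on the dense set $E$, we conclude $\ft\varphi \equiv 0$, hence $\varphi \equiv 0$.

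The statement for $\ft\mu$ follows by the identical argument applied to the representation \eqref{1.3}, with $\Gam$ replaced by $\Gam^*$ and $\ft\varphi$ replaced by $\varphi$. The hypotheses of \lemref{lem1.2} (injectivity of the projections with dense images) hold equally for the dual lattice, so the proof goes through verbatim.

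The main potential obstacle is the density of $E$: a priori, removing a countable subset from the countable dense set $p_2(\Gam)$ could destroy density. The argument resolves this by exploiting the non-horizontality of each $\ell_j$ to upgrade the removed set from merely countable to locally finite, after which local density of what remains is immediate.
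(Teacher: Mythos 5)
Your proof is correct and takes essentially the same approach as the paper's, just viewed from the complementary angle: you show $\ft\varphi$ vanishes on a dense set $E = p_2(\Gamma)\setminus \bigcup_j p_2(\Gamma\cap\ell_j)$, while the paper equivalently shows that the set $p_2(\Gamma_0)$ (where $\ft\varphi$ does \emph{not} vanish on $p_2(\Gamma)$) is discrete closed and yet must have closure equal to $\supp(\ft\varphi)$. Both rely on exactly the same ingredients --- Lemma~\ref{lem1.2}, the density of $p_2(\Gamma)$, and the observation that $p_2$ maps $\Gamma\cap\ell_j$ to a discrete set when $\ell_j$ is not horizontal --- so this is the same argument, not a different route.
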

 
\begin{proof}
Indeed, if the support of the measure $\mu$ is contained
in a finite union of arithmetic progressions, then by \lemref{lem1.2} the set
$$\Gamma_0 :=\{(x,y)\in\Gamma : \hat\varphi(y) \neq 0\}$$
must be contained in a finite union of lines.
Thus $p_2(\Gamma_0)$ is a discrete closed set in $\R$.
But on the other hand, since $p_2(\Gamma)$ is dense in $\R$, the closure of $p_2(\Gamma_0)$ must be
equal to $\supp(\ft\varphi)$, a contradiction. 
Hence the support of $\mu$ is not contained in any finite union of arithmetic progressions,
and similarly, the same is true for the support of $\ft{\mu}$.
\end{proof}

 In the first version of this paper the property \ref{thm1:b} in \thmref{thm1} 
was not mentioned explicitly,  being replaced  by \ref{thm1:c} and \ref{thm1:d} 
above. In this weaker  form,  another  proof of our result  was given  by Kolountzakis \cite{kol},
who used an infinite  sum of  appropriately chosen  Poisson  measures.

One may actually consider stronger versions of properties \ref{thm1:c} and \ref{thm1:d}.
For instance, given $\eps > 0$ there is a decomposition of $\mu$ as the sum of two measures
$\mu = \mu_1 + \mu_2$ such that $\mu_1$ is supported on a model set and $\|\mu_2\| < \eps$,
where by  $\|\cdot\|$ we denote the natural norm on the space of translation-bounded measures
defined by \eqref{2.9}. It follows (again by  \lemref{lem1.2}) that $\mu$ may not even be approximated,
with respect to this norm, to arbitrary degree by measures whose supports are contained
in finite unions of arithmetic progressions. The same is true for the measure $\ft{\mu}$.


\section{Remarks}

\subsection{}
One may obtain results of similar type also in $\R^n$. For instance,  the
product $\mu \times \dots \times \mu$ ($n$ times) of the measure $\mu$ in our
proof gives an example of a measure in $\R^n$, whose support and spectrum
are both discrete closed sets, and the support cannot be covered by any finite
union of translated lattices, nor may it contain infinitely many elements of an
arithmetic progression.

\subsection{}
By choosing the sequence $\{a_n\}$ increasing sufficiently fast, the measure $\mu$
may be constructed with the additional property that the minimal distance between
consecutive points of $\Lam$ in the interval $(-R,R)$ approaches zero 
arbitrarily slowly as $R \to \infty$.

\subsection{}
In \cite{lo1, lo2} we obtained an affirmative answer to Problem 4.1(a) in \cite{lag2},
which asked whether it is true that a positive measure $\mu$ in $\R^n$ may have 
 uniformly discrete   support and spectrum 
only if each of them is contained in a finite union of translates of some lattice.

Problem 4.1(b) from that paper asks whether the periodic structure is still necessary
if the  support and the spectrum  are just discrete closed sets.
Here we basically answer this question in the negative, but without the positivity of the measure $\mu$.
At present we do not know whether in our construction one can get a positive measure.

\subsection{}
To each  measure satisfying \eqref{mes1} and \eqref{mes2} corresponds a weighted summation formula
\begin{equation}\label{6.1}
\sum_{\lam\in \Lam} \mu(\lam) \ft f (\lam) = \sum_ {s\in S}  \ft{\mu} (s) f(s),
\end{equation}
which holds for any Schwartz function $f$. If $\mu$ and $\ft\mu$ are  translation-bounded measures
(or, more generally, measures with polynomial growth) then both series in \eqref{6.1} converge absolutely,
otherwise an appropriate summation method should be used to sum them.

An interesting  summation formula may be found in  \cite[p.\ 265]{gui}, which involves weighted sums of
$f$ and $\ft f$ at the nodes $\{\pm (n+\tfrac1{9})^{1/2}\}$ $(n=0,1,2,\dots).$
 However it is not clear to which class of functions it applies. In particular, whether          it corresponds to  a  temperate  distribution.

Remark that the nodes in this example contain two arithmetic progressions $3 \Z \pm \frac1{3}$.


\end{document}